\title[Conjugacy classes and ordinary characters of the Monster]
{Verification of the conjugacy classes and ordinary character table of the Monster}
\author{Thomas Breuer, Kay Magaard and Robert A. Wilson}
\date{19th October 2017; revised 21st March 2021; this version 12th December 2024}
\begin{document}
\newcommand{\Atlas}{$\mathbb{ATLAS}$}
\def\Irr{\operatorname{Irr}}
\newcommand{\M}{\mathbb M}
\newcommand{\B}{\mathbb B}
\newcommand{\Fi}{\mathrm{Fi}}
\newcommand{\HN}{\mathrm{HN}}
\newcommand{\Th}{\mathrm{Th}}
\newcommand{\Co}{\mathrm{Co}}
\newcommand{\McL}{\mathrm{McL}}
\newcommand{\Suz}{\mathrm{Suz}}
\newcommand{\Ja}{\mathrm J}
\newcommand{\Ma}{\mathrm M}
\newcommand{\He}{\mathrm{He}}

\newcommand\pr{\noindent {\rm Proof.\ \ \/}}
\newcommand\eop{{\hfill $\Box$ \vskip 2ex}}
\newtheorem{lemma}{Lemma}
\newtheorem{hypo}{Hypothesis}
\newtheorem{fact}{Fact}
\newtheorem{prop}{Proposition}

\begin{abstract}
As part of the programme to re-compute the character tables of all the groups in
the \Atlas\ we re-compute the character table of $\M$, the 
Monster simple group. We operate under the uniqueness hypotheses of $\M$  and the 
existence of an ordinary faithful representation of 
degree $196883 = 47.59.71$ and determine 
the conjugacy classes and centralizer orders of the elements of $\M$.
Along the way we re-compute the character tables 
of centralizers of $p$-elements for $p < 11$ as well as 
fusions of conjugacy classes of these centralizers in $\M$. 

\end{abstract}

\maketitle

\centerline{\textit{In memory of Kay Magaard and Richard Parker}}

\section{Introduction}
The character table of the Monster sporadic simple group was originally calculated by Fischer and Livingstone
using computer programs written by Thorne. The table was checked in various ways before publication in the
\Atlas\ \cite{Atlas} in 1985, and its incorporation into GAP \cite{GAP}. 
Nevertheless, the problem of reliability and reproducibility of this table remains.
The other character tables in the \Atlas\ have now all been independently checked \cite{verify,Bverify1,Bverify2,2Bverify},
leaving just this one outstanding case.

In this paper we outline the proof that the character table of the Monster is correct,
and describe the strategy for the computations, which are described in greater detail in \cite{steps}. The latter is intended to
provide enough detail to enable the reader to check every step of the proof, in order to satisfy the demands of reproducibility.

In Section~\ref{hypoth} we set out our basic assumptions, and summarise the results from the literature that we rely on.
In Section~\ref{conseq} we compute the restrictions of the character of degree $196883$ to several subgroups
that will be used later, and also compute the restriction to the double cover of the Baby Monster of the
permutation character on the $6$-transpositions. 
This calculation enables us to compute many centralizer orders using Frobenius reciprocity.

The main part of the paper is Section~\ref{ccls}, in which we compute the complete list of
conjugacy classes. Here the strategy is to begin with the classes of elements of even order, then those of order
divisible by $3$, then $5$, and the remaining primes in the order
$$11, 17, 19, 23, 31, 47, 13, 29, 41, 59, 71, 7.$$
Finally, in Section~\ref{chartab} we briefly describe the computation of the values of the character of degree $196883$,
and the computation of the full character table from that information.

\section{Basic hypotheses} 
\label{hypoth}
Our starting point is the existence and uniqueness of $\M$ as demonstrated
in \cite{G}, \cite{C} and \cite{GMS}. 
Precisely we make the following hypothesis.
\begin{hypo}
$\M$ is a finite group which possesses
\begin{enumerate}
 \item  precisely two classes of involutions, represented by $z$ and $t$,  with centralizers $C_\M(z) \cong 2.\B$, 
 the double cover of the Baby Monster,
and $C_\M(t) \cong 2^{1+24}.\Co_1$, respectively, and
\item an ordinary irreducible representation affording a character $\chi$ of degree
$196883$. 
\end{enumerate}
\end{hypo}
We label the involution classes so that $2A$ is represented by $z$ and $2B$ is represented by $t$. 

From the uniqueness paper \cite{GMS} we use the following facts: 
\begin{fact}
The suborbit lengths of $2.\B$ acting on $z^\M$, where $1 \neq z \in Z(2.\B)$, and the
corresponding point 
stabilizers in $2.\B$ are as given in Table~\ref{suborbits}. 
\end{fact}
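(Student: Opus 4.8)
The plan is to re-derive this from the Hypothesis by passing to the $196884$-dimensional commutative algebra built from $\chi$; this is essentially the content of the uniqueness analysis in \cite{GMS}, and I would organise it as follows.

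\textbf{Realising $z^{\M}$ by axes.} Adjoin a one-dimensional trivial summand (the identity) to the module affording $\chi$ to obtain a $196884$-dimensional commutative non-associative $\M$-algebra $V$ carrying an invariant symmetric bilinear form. First I would check that each $2A$-involution $z'$ determines an idempotent ``axis'' $a_{z'}\in V$ (a suitably normalised idempotent eigenvector that in turn determines $z'$), that $z'\mapsto a_{z'}$ is an $\M$-equivariant injection, and that its image is a single $\M$-orbit. Then the $2.\B = C_{\M}(z)$-suborbits on $z^{\M}$ correspond exactly to the $\mathrm{Stab}_{\M}(a_z)$-orbits on the axes, and the stabiliser of the suborbit through $z'$ is $C_{\M}(z)\cap C_{\M}(z') = C_{2.\B}(z')$.

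\textbf{The engine: two-generated subalgebras.} For a second axis $b=a_{z'}$, the group $\langle z,z'\rangle$ is dihedral, and one shows it has order at most $12$ (the $6$-transposition property of $\M$ on its class $2A$). The crucial step is to classify the subalgebras $\langle a_z,b\rangle$: there are exactly nine possibilities, the Norton--Sakuma algebras, indexed by the $\M$-class of $zz'$ --- namely $1A$ (when $z'=z$), $2A$, $2B$, $3A$, $3C$, $4A$, $4B$, $5A$, $6A$ --- and distinguished among themselves by the value $(a_z,b)$ together with the structure of the product $a_z b$. Since $\M$ acts transitively on ordered pairs of axes of each fixed type, fixing $a_z$ then yields precisely nine suborbits, one for each admissible class of $zz'$.

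\textbf{Filling in the table.} For each such class $C$ the suborbit length is $|2.\B : C_{2.\B}(z')|$, where $C_{2.\B}(z') = C_{\M}(z)\cap C_{\M}(z')$ can be pinned down by locating it simultaneously inside $2.\B$ and inside the known centraliser $C_{\M}(c)$ of an element $c\in C$ --- according to the order of $c$ this is $2.\B$ itself, or is built from one of $\Fi_{24}'$, $\Th$, $\HN$, ${}^2E_6(2)$ or $\Co_1$ --- after which the nine lengths are cross-checked against $\sum_i |2.\B : C_{2.\B}(z'_i)| = |\M : 2.\B|$. I expect the main obstacle to be precisely the classification of the two-generated subalgebras together with the transitivity of $\M$ on ordered pairs of each type: this is where the detailed local and subgroup structure of $\M$, $\B$, $\Co_1$, $\Fi_{24}'$, ${}^2E_6(2)$, $\HN$, $\Th$ and so on really enters, and it is exactly the heavy lifting carried out in \cite{GMS}.
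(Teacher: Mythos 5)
The first thing to note is that the paper does not prove this statement at all: it is labelled a \emph{Fact} and imported verbatim from the uniqueness paper \cite{GMS}, as part of the basic set-up alongside the Hypothesis on the two involution centralizers and the character $\chi$. So there is no proof in the paper to compare yours against; the only question is whether your sketch would stand as an independent derivation. As an account of where the fact ultimately comes from it is conceptually on target: the suborbits of $C_\M(z)$ on $z^\M$ are indexed by the $\M$-classes of the products $zz'$, the $6$-transposition property bounds $|\langle z,z'\rangle|$, and the nine admissible classes $1A,2A,2B,3A,3C,4A,4B,5A,6A$ are exactly the Norton--Sakuma types.

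As a proof, however, it has genuine gaps. First, the existence of an invariant commutative product on $1\oplus 196883$ requires knowing that $196883$ occurs in its own symmetric square; from the Hypothesis alone you do not have the character values needed to verify this (they are only computed later in the paper), and Griess obtains the algebra by explicit construction, not from $\chi$. Second, the Norton--Sakuma classification (Sakuma's theorem) classifies two-generated subalgebras up to isomorphism under fusion-rule hypotheses that must themselves be verified for the Monster axes; and in any case ``at most nine isomorphism types of subalgebra'' does not yield ``exactly nine suborbits'': the injectivity of the map from suborbits to product classes is precisely the transitivity of $C_\M(z)$ on each set $\{z'\in 2A : zz'\in C\}$, which you flag but do not prove, and that is where essentially all the work lies. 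Third, your final step locates $C_{2.\B}(z')$ inside ``the known centraliser $C_\M(c)$'' for $c$ of order $3$, $4$, $5$ or $6$ --- but in the logical structure of this paper those centralizers ($3.\Fi_{24}'$, $3\times\Th$, $5\times\HN$, and so on) are \emph{deduced from} Table~\ref{suborbits}, not available beforehand, so within this paper the argument would be circular. The honest summary is that your outline, like the paper, ends up delegating every hard step to \cite{GMS}; the paper is simply explicit that it is doing so.
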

\begin{table}
\caption{\label{suborbits}Suborbit information}
$$\begin{array}{cccr}
\mbox{Product class} &\mbox{Class representative} & \mbox{Stabilizer} & \mbox{Orbit length}\cr
\hline
1A && 2.\B & 1\cr
2A &z& 2^2.{}^2E_6(2) & 27143910000\cr
2B &t& 2^{2+22}\Co_2 & 11707448673375\cr
3A &x_1& \Fi_{23} &  2031941058560000
\cr
3C &x_3& \Th &91569524834304000\cr
4A && 2^{1+22}\McL &1102935324621312000\cr
4B && 2.F_4(2) & 1254793905192960000\cr
5A &y_1& \HN & 30434513446055706624\cr
6A && 2.\Fi_{22} & 64353605265653760000\cr
\hline
\end{array}$$
\end{table}

In particular we know that there exist elements $x_1$, $x_3$ and $y_1$ of orders $3$, $3$ and $5$ respectively 
with centralizers equal to $3.\Fi_{24}'$, $3 \times \Th$ and $5 \times \HN$. By construction we 
deduce that the $3$-elements $x_1$ and $x_3$ are rational and the normalizers are $3.\Fi_{24}$ and $S_3 \times \Th$ respectively.
The element $y_1$ of order $5$ fuses into $2.\B$ and thus we can deduce that it is rational.

We assume in addition the character tables of $2.\B$ and a number of other \Atlas\ groups that have already been verified
 \cite{Bverify1,Bverify2,2Bverify,verify}.
We use explicit representations for several $p$-local subgroups of the Monster
    (or groups such that some factor group is a subgroup of the Monster),
    from which we recompute their character tables. We believe the correctness of these
    representations is adequately justified in the literature. 
 Finally, we use (and sketch a proof of) the existence of a subgroup of the structure $3^8.O_8^-(3)$ from \cite{W},
    which is needed in the argument to justify the structure of the Sylow $41$-normalizer,
    mentioned in Section~\ref{3elements} in Step 4, and used in Section~\ref{29-71elements} in Step 2.

\section{First consequences}
\label{conseq}
We begin by showing that there are unique possibilities for the restrictions of $\chi$ to the subgroups $2.\B$ and $3.\Fi_{24}$. Full details of the calculations
are in Section 2 of \cite{steps}.
\begin{lemma}
 The restriction of $\chi$ to $C_\M(z)$ is $1a + 4371a+ 96255a + 96256a$. 
\end{lemma}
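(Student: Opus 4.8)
The plan is to determine $\chi|_{C_\M(z)}$ by elementary character theory, using the (already verified) character table of $2.\B$, the fact that $z$ is a central involution of $2.\B$, and one piece of information about the action of $\M$ on the class $z^\M$ of $6$-transpositions.

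First I would observe that $\chi|_{C_\M(z)}$ is an ordinary character of $2.\B$ of degree $196883$, hence a nonnegative integer combination of irreducible characters of $2.\B$ of degree at most $196883$. Reading these off the character table, the only possibilities are the characters $1a$, $4371a$, $96255a$ inflated from $\B$ (the next irreducible of $\B$ has degree $1139374$) and the single faithful irreducible $96256a$ (the next faithful irreducible of $2.\B$ exceeds $196883$). So the task reduces to solving $96256a+96255b+4371c+d=196883$ in nonnegative integers and identifying which solution is realised. Since $\M$ is simple and $\chi\neq 1_\M$ we have $\ker\chi=1$, so $z$ acts nontrivially on $\chi$; as $z$ acts as $-1$ in $96256a$ and trivially in the other three constituents, this forces $a\geq 1$ (and then $a\in\{1,2\}$, since $3\cdot 96256>196883$).

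Next I would bring in the permutation character $\pi=(1_{C_\M(z)})^\M$ of $\M$ on $z^\M$. By Frobenius reciprocity $d=\langle\chi|_{C_\M(z)},1_{C_\M(z)}\rangle=\langle\chi,\pi\rangle$ is the multiplicity of $\chi$ as a constituent of $\pi$. Table~\ref{suborbits} lists nine suborbits, so the action has rank $9$, i.e.\ $\langle\pi,\pi\rangle=9$; together with $\langle\pi,1_\M\rangle=1$, the squared multiplicities of the nontrivial constituents of $\pi$ sum to $8$, so each is at most $2$, whence $d\leq 2$. Granting in addition that $d\geq 1$, the constraints $a\geq 1$ and $1\leq d\leq 2$ leave exactly one solution of the degree equation, namely $a=b=c=d=1$, so $\chi|_{C_\M(z)}=1a+4371a+96255a+96256a$; as a check its value at $z$ is $1+4371+96255-96256=4371$.

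The one genuinely non-formal step --- the main obstacle --- is establishing $d\geq 1$, equivalently ruling out the arithmetic near-miss $\chi|_{C_\M(z)}=2\cdot 96256a+4371a$, which carries no trivial constituent and which the degree and rank bounds alone permit. I would obtain $d\geq 1$ either from the geometry of the $6$-transpositions (in the construction underlying \cite{G,C,GMS} the $2A$-axes span the $196884$-dimensional algebra $1_\M+\chi$, which is therefore a constituent of $\mathbb{C}[z^\M]$), or, staying within the present section, by also computing $\chi|_{C_\M(t)}$ for $C_\M(t)\cong 2^{1+24}.\Co_1$ --- which pins down $\chi(t)=275$ --- and comparing with the value of $\chi$ on the $2B$-element $zu$ arising in the ``$2B$'' suborbit of Table~\ref{suborbits}, which is incompatible with $2\cdot 96256a+4371a$. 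The remaining ingredients (that $1139374$ and the second faithful degree of $2.\B$ both exceed $196883$, and the bookkeeping in the degree equation) are routine table look-ups and arithmetic.
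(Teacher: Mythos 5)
Your reduction to the four candidate constituents, the bound $a\le 2$, the deduction $a\ge 1$ from faithfulness of $\chi$ and centrality of $z$, and the bound $d\le 2$ obtained from the rank-$9$ permutation character via Frobenius reciprocity are all correct, and they do leave exactly the two combinations you identify. But the step you yourself flag as the main obstacle, namely $d\ge 1$, is a genuine gap as you have left it. Your first route invokes the fact that the $2A$-axes span the $196884$-dimensional Griess algebra; that is true of the module constructed in \cite{G} and \cite{C}, but it is not among the hypotheses of this paper, which assumes only that $\M$ possesses \emph{some} irreducible character of degree $196883$ (and the lemma must hold for any such $\chi$, not just the one carrying the algebra structure). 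Your second route is worse off: determining $\chi|_{C_\M(t)}$ uniquely is a problem of exactly the same kind as the one you are trying to solve --- degree bookkeeping in $2^{1+24}.\Co_1$ alone admits several faithful-plus-nonfaithful decompositions (for instance $2\cdot 98304+275\cdot 1$) unless a further constraint is imposed, and the character table of $C_\M(t)$ is only computed later in the paper, not in the present section, so the claim that this ``pins down $\chi(t)=275$'' is unsupported where you need it.

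The paper closes the case with a much cheaper observation that you did not use: by Hypothesis 1 the group $\M$ has precisely two classes of involutions, so $\chi$ takes at most two distinct values on involutions, and hence any candidate combination must take at most two distinct values across \emph{all} the involution classes of $2.\B$. Checking this against the already verified character table of $2.\B$ eliminates $2\cdot 96256a+4371a$ and every other spurious solution of the degree equation in one stroke, with no need for the permutation character, the Griess algebra, or $C_\M(t)$. If you add that single constraint your argument becomes complete (and essentially collapses onto the paper's); as it stands, the proof is incomplete at the decisive step.
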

\begin{proof}
If $\psi \in \Irr(C_\M(z))$ with $\psi(1) \le \chi(1)$, then $\psi \in \{1a,4371a,96255a, 96256a\}$. 
The only integral linear combination (other than $ \chi(1)1a$) of these characters having degree equal to $\chi(1)$, and 
at most two distinct values on involution classes, is our claimed combination.  
\end{proof}

    \begin{lemma}
    \label{3F24char}
    The restriction of $\chi$ to $C_\M(x_1)$ is
    $1a + 8671a + 57477a + 
    783ab+64584ab$. 
    The restriction of $\chi$ to $N_\M(\langle x_1\rangle)$ is
    $1a+8671b+57477a+1566a+129168a$.
    \end{lemma}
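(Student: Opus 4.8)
The strategy mirrors the proof of the preceding lemma: enumerate the irreducible characters of $C_\M(x_1) \cong 3.\Fi_{24}'$ of degree at most $196883$, then pin down the restriction of $\chi$ by degree and by its values on a few well-chosen classes. First I would use the (already verified) character table of $3.\Fi_{24}'$ — or equivalently of $\Fi_{24}'$ together with its triple cover — to list all $\psi \in \Irr(3.\Fi_{24}')$ with $\psi(1) \le 196883$; since $x_1$ is central of order $3$, these split into characters trivial on $\langle x_1\rangle$ (i.e. inflated from $\Fi_{24}'$) and genuinely faithful ones (the latter occurring in the algebraically conjugate pair encoded by the $ab$ notation, e.g. $783ab$ and $64584ab$). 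The claim is that the only way to write $196883$ as a non-negative integer combination of these — other than the trivial $196883 \cdot 1a$ — subject to the constraint that the result is the restriction of a genuine character of $\M$ (in particular it must be \emph{real-valued}, forcing the two members of each complex-conjugate faithful pair to appear with equal multiplicity), is $1a + 8671a + 57477a + 783ab + 64584ab$.

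The key steps, in order: (i) produce the list of small-degree irreducibles of $3.\Fi_{24}'$ and record their degrees; (ii) solve the linear Diophantine problem $\sum m_i \psi_i(1) = 196883$ with the pairing constraint on faithful characters, which should leave only a handful of candidate decompositions; (iii) eliminate the spurious solutions using extra information — the values of $\chi$ on the classes $2A$, $2B$, $3A$ etc. of $\M$ intersected with $C_\M(x_1)$ (these character values of $\chi$ on a few classes are either already known or constrained by Frobenius reciprocity and the embedding $3.\Fi_{24}' \hookrightarrow \M$), plus the requirement that $\chi$ restricted to the central $\langle x_1\rangle$ is $196883 \cdot 1$ since $\chi$ is a character of $\M$ and $x_1$ has a fixed point structure consistent with that; this last point actually forces the total multiplicity of faithful constituents to have the right congruence. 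Then for the second assertion I would induct/restrict along the short exact sequence $1 \to \langle x_1\rangle \to N_\M(\langle x_1\rangle) \to S_3 \times \Fi_{24}' \to$ (using $N_\M(\langle x_1\rangle) \cong 3.\Fi_{24}$, an index-$2$ overgroup of $3.\Fi_{24}'$): the already-determined restriction to $3.\Fi_{24}'$ must extend to $N_\M(\langle x_1\rangle)$, and since the inverting element swaps each faithful character with its conjugate, the pair $783ab$ fuses to a single irreducible $1566a$ of $3.\Fi_{24}$ and likewise $64584ab \to 129168a$, while $8671a$ is replaced by the character $8671b$ of the other extension-type; checking which of the two extensions of $8671a$ occurs is a short computation with the value on a class of $\M$ lying outside $3.\Fi_{24}'$.

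The main obstacle I expect is step (iii): the degree equation alone will not be enough to isolate the claimed decomposition — there will be competing combinations (for instance ones using two faithful pairs, or a different split among the $\Fi_{24}'$-inflated characters of degrees near $8671$, $57477$, $783$), and ruling these out cleanly requires genuinely knowing a few values of $\chi$ on classes of $\M$ that meet $C_\M(x_1)$. At this stage of the paper those values have not yet been computed in general, so the delicate part is identifying the minimal set of class values (plain rationality/reality, the value $4371+1+96255+96256$-type bookkeeping inherited from Lemma 1 via the common overgroup $2.\B$ containing a conjugate of a suitable element, and the centrality of $x_1$) that suffice, and verifying internal consistency so that the argument is not circular. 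The second, $N_\M$-level, assertion is then comparatively routine once the $3.\Fi_{24}'$ restriction and the action of the outer automorphism on $\Irr(3.\Fi_{24}')$ are in hand.
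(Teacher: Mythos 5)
Your approach is essentially the paper's: list the irreducibles of $C_\M(x_1)\cong 3.\Fi_{24}'$ of degree at most $196883$, impose the degree equation together with the pairing of the faithful complex-conjugate constituents, and then discriminate among the surviving candidates using information on involutions; the normalizer statement is then settled by which extension of each constituent occurs. The one place where you stop short is your step (iii): the paper does \emph{not} need the actual values of $\chi$ on the classes $2A$ and $2B$ (nor any value inherited from the $2.\B$ restriction). It uses only the structural fact, available directly from Hypothesis 1, that $\M$ has exactly two classes of involutions, so the restriction of $\chi$ to any subgroup can take \emph{at most two distinct values on involution classes} of that subgroup. This single condition, applied to the candidate decompositions, already isolates $1a+8671a+57477a+783ab+64584ab$, exactly as it did in the $2.\B$ lemma; it also settles the choice between $8671a$ and $8671b$ at the level of $N_\M(\langle x_1\rangle)=3.\Fi_{24}$ (``the extensions to the normalizer are determined by character values on involutions''). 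So the circularity you worry about does not arise: no value of $\chi$ on a class of $\M$ is needed, only the count of involution classes. Your alternative --- computing $\chi(2A)$ and $\chi(2B)$ from the already-established restriction to $2.\B$ and matching against each candidate --- would also work and is not circular, but it is heavier than necessary and requires tracking the fusion of involution classes of $3.\Fi_{24}'$ into $\M$, which the paper's criterion sidesteps.
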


    \begin{proof}
    If $\psi\in\Irr(C_\M(x_1))$ with $\psi(1)\le\chi(1)$,
    then $$\psi\in\{ 1a, 
    8671a, 
    57477a, 
    783a, 783b, 64584a, 64584b\}.$$
    The only integral linear combination (other than $\chi(1) 1a$)
    of these characters having degree equal to $\chi(1)$,
    and at most two distinct values on involution classes,
    is our claimed combination. Finally, the extensions to the normalizer are determined by
    character values on involutions.
    \end{proof}

Also we can deduce enough from the suborbit information (Table~\ref{suborbits}) to compute the centralizer orders of those elements 
of orders $3$, $5$, $7$, $11$, $13$, $17$, $19$, $23$, $31$ and $47$ that fuse into $2.\B$, by computing the
values of the induced character. 
Indeed, 
we can 
compute the full permutation character (restricted to $2.\B$) of $\M$ on the cosets of $2.\B$, one suborbit at a time.
This is done by GAP calculations, described in Section 3 of \cite{steps}, with the character table of $2.\B$, and restrictions to the subgroups
in Table~\ref{suborbits}.

For the four smallest non-trivial orbits we list the GAP/$\mathbb{ATLAS}$ numbers of the characters of $2.\B$ that occur
in the permutation character. The first three are multiplicity-free:
\begin{enumerate}
\item Product $2A$. 
The character of $C_\M(z)$ is 
$ [ 1, 2, 3, 5, 7, 13, 15, 17 ]$, that is \\
$\chi_1 + \chi_2 + \chi_3 +\chi_5+\chi_7 +\chi_{13}+\chi_{15} + \chi_{17}$.

\item Product $2B$. 
The character of $C_\M(z)$ is 
 $ [ 1, 3, 5, 8, 13, 15, 28, 30, 37, 40 ]$.   

 \item Product $3A$. 
  The character of $C_\M(z)$ is\\
$ [ 1, 2, 3, 5, 7, 8, 9, 12, 13, 15, 17, 23, 27, 30, 32, 40, 41, 54, 63, 68,\\ 77, 81, 83,
185, 186, 187, 188, 189, 194, 195, 196, 203, 208, 
  220 ]$. 
  
\item Product $3C$. 
The character of $C_\M(z)$ is \\
$[ 1, 3, 7, 8, 12, 13, 16, 19, 27, 28, 34, 38, 41, 57, 62, 62, 68, 70, 77, 78, 85,\\ 
89, 113, 114, 116, 129, 133, 142, 143, 145, 155, 156, 185, 187, 
  188, 193,\\ 
  195, 196, 201, 208, 216, 219,
   225, 232, 233, 235, 236, 237, 242 ]$ \\
   (so that $\chi_{62}$ occurs with multiplicity two).
\end{enumerate}
For the four largest orbits, we list in Table~\ref{suborbitchars}
the multiplicities of all the irreducibles of $2.\B$ in the permutation character,
in GAP/\Atlas\ order. In the first two cases, these are arranged $30$ per line, and in the last two, $20$ per line.

\begin{table}\caption{\label{suborbitchars}Characters of the large suborbits restricted to $C_\M(z)$} 
Product $4A$. 

$ \begin{array}{l}
[  1, 1, 2, 1, 2, 0, 2, 3, 2, 0, 0, 1, 4, 1, 2, 0, 3, 2, 0, 2, 0, 0, 2, 2, 0, 0, 2, 3, 1, 5,\\
       0, 4, 3, 2, 0, 0, 3, 2, 0, 6, 4, 0, 1, 1, 0, 0, 0, 0,  3, 0, 1, 0, 0, 5, 0, 5, 2, 0, 0, 2,\\
       0, 0, 4, 1, 0, 2, 0, 4, 2, 4, 4, 3, 0, 2, 4, 2, 4, 0, 3, 0, 3, 2, 5, 0, 1, 0, 3, 1, 0, 1,\\
       1, 2, 5, 3, 1, 1,  4, 5, 1, 1, 0, 3, 0, 0, 3, 2, 1, 1, 2, 1, 1, 4, 0, 3, 2, 3, 1, 3, 0, 1,\\
      3, 0, 2, 2, 1, 3, 3, 0, 0, 2, 0, 0, 0, 0, 3, 0, 3, 3, 3, 1, 0, 3, 0, 4, 0, 1, 0, 0, 2, 0,\\
       0, 2, 0, 0, 2, 1, 1, 0, 0, 0, 0, 1, 2, 1, 1, 1, 0, 1, 1, 1, 1, 1, 1, 0, 2, 1, 1, 3, 3, 0,\\
       0, 0, 1, 1, 1, 1, 2, 3, 2, 0, 0, 2, 2, 4, 3, 5, 2, 4, 0, 0, 0, 0, 5, 2, 0, 0, 0, 1, 1, 0,\\
       0, 0, 0, 0, 0, 7,  0, 0, 1, 7, 7, 0, 0, 0, 1, 6, 4, 5, 0, 0, 3, 0, 0, 0, 0, 0, 4, 1, 1, 3,\\
      8, 3, 2, 2, 5, 0, 1 ] 
      \end{array} $

Product $4B$.  

$ \begin{array}{l}
 [ 1, 1, 2, 0, 2, 0, 2, 2, 1, 0, 0, 2, 4, 1, 3, 0, 2, 1, 0, 0, 0, 0, 2, 1, 0, 0, 2, 2, 1, 4,\\
 0, 2, 1, 2,  0, 0, 3, 2, 0, 4, 4, 0, 0, 0, 0, 0, 1, 0, 0, 0, 1, 0, 0, 2, 1, 3, 3, 0, 0, 3,\\
     0, 1, 4, 0, 0, 3, 0, 6, 0, 3, 2, 0, 0, 1, 4, 1, 4, 2, 6, 1,  4, 0, 4, 0, 1, 1, 2, 0, 0, 
     3,\\ 2, 1, 3, 2, 0, 0, 4, 5, 3, 1, 0, 3, 0, 0, 1, 1, 2, 0, 0, 2, 0, 2, 0, 3,   3, 3, 0, 4,
     1, 0,\\ 4, 1, 1, 1, 1, 1, 2, 1, 1, 2, 3, 0, 0, 2, 2, 0, 5, 5, 3, 0, 1, 5, 1, 4, 0, 1, 0,
     1, 1, 0,\\ 0, 3, 1, 0, 2, 3, 1, 0, 2, 0, 0, 2, 1, 0, 1, 0, 0, 1, 0, 0, 0, 1, 0, 0, 1, 2,
     1, 4, 4, 0,\\ 0, 0, 3, 1, 1, 1, 2, 2, 2, 0, 0, 1, 2, 3, 3, 3, 1, 2, 0, 0, 1, 1, 4, 2, 0,
     0, 0, 3, 2, 0,\\ 0, 0, 0, 0, 0, 4, 0, 0, 1, 5, 5, 0, 1, 1, 2, 2, 4, 4, 0, 0, 3, 1, 1, 1,
     0, 0, 4, 1, 1, 5,\\  7, 3, 2, 5, 5, 0, 1 ]
     \end{array}
     $
  
  Product $5A$. 

$ \begin{array}{l}
[ 1, 1, 2, 1, 2, 0, 3, 4, 2, 1, 1, 4, 4, 2, 1, 1, 3, 3, 1, 3,\\
 0, 0, 5, 3, 0, 0, 6, 4, 5, 6, 1, 7, 4, 7, 
  0, 0, 3, 8, 2, 6, \\ 11,
   2, 5, 5, 0, 0, 2, 1, 3, 4, 7, 0, 0, 7, 3, 9, 5, 0, 0, 6,\\ 4, 
   2, 13, 6, 0, 4, 4,
   12, 11, 16, 9, 7, 3, 11, 13, 12, 20, 
  5, 10, 6,\\ 11, 
  13, 17, 4, 10, 7, 19, 7, 7, 8, 10, 14, 18,
   19, 5, 10, 12, 23, 7, 12,\\ 6,
    24, 6, 4, 17, 16, 8, 9, 17, 11, 12, 
  23, 8, 24, 18, 26, 21, 29,
   10, 18,\\ 31,
    10, 24, 21, 17, 27, 35, 13, 14, 29, 19, 12, 7, 18, 26, 15, 34, 34, 35, 20,\\ 14,
    36, 14, 39, 8, 29, 
  24, 15, 40, 13, 9, 38, 24, 17, 35, 32, 26, 26, 24, 22,\\ 17,
  31, 39, 29,
   30, 30, 19, 44, 37, 37, 28, 30, 31, 29, 42, 40, 40, 56, 56, 30,\\ 
  30,
  42, 50, 47, 2, 2, 4, 6,
   4, 0, 0, 4, 6, 10, 10, 12, 8, 12, 0, 0,\\ 2,
    4, 16, 10, 0, 0, 2, 12, 10, 0, 0, 0, 0, 0, 
  0, 28, 0, 0, 
  14, 34,\\ 40, 2, 10, 10, 22, 40, 44, 44, 8, 8, 36, 14, 14, 16, 8, 8, 46, 28, 28, 58,\\ 90, 72,
   70, 92, 104, 56, 90 ]
   \end{array}$
   
 Product $6A$. 

$\begin{array}{l}[ 1, 2, 3, 1, 4, 1, 5, 5, 5, 1, 1, 5, 8, 4, 4, 1, 7, 6, 0, 5, \\
0, 0, 10, 7, 0, 0, 10, 6, 6, 13, 3, 14, 
10, 11,   0, 0, 5, 11, 2, 14,\\
 19, 6, 6, 5, 0, 0, 0, 3, 6, 7, 11, 0, 0, 17, 2, 20, 9, 0, 0, 12,\\
  8, 1, 23, 11, 1, 8, 7, 23, 18, 27, 18, 12, 7, 22,  29, 21, 34, 6, 22, 7, \\
  22, 18, 33, 3, 19, 10, 34,
12, 12, 15, 17, 28, 34, 34, 7, 20, 26, 40, 15, 25, \\
3, 40, 9, 6, 34, 25, 18, 21,  30, 21, 18, 
43, 12, 45, 39, 49, 38, 51, 18, 32,\\
 63, 19, 42, 41, 33, 48, 64, 27, 29, 52, 38, 29, 19, 40, 
47, 31, 69, 69, 65, 42,\\
 35, 68,  27, 73, 20, 53, 46, 38, 75, 29, 24, 72, 50, 41, 72, 68, 58, 
52, 54, 50,\\ 44, 64, 75, 58, 69, 65, 49, 85, 75, 75, 63, 68, 65, 63, 90, 87,  83, 118, 118, 74, \\
71, 90, 109, 109, 2, 3, 6, 9, 8, 0, 0, 7, 10, 18, 16, 22, 12, 23, 0, 0,\\
 2, 6, 28, 19, 0, 0, 5, 16, 
18, 0,  0, 0, 0, 0, 0, 52, 1, 1, 26, 59,\\
 76, 11, 18, 18, 39, 77, 80, 77, 22, 22, 66, 27, 27, 33, 
20, 20, 87, 60, 60, 103, \\
175, 148, 152, 187,  215, 140, 201 ]\end{array}$

\end{table}

\section{Elements of 
prime 
order: Fusion, conjugacy and centralizers}
\label{ccls}

\subsection{The character table of $C_\M(t)$ where $t \in 2B$}
First we compute the character table of the full Schur cover of $2^{24}.\Co_1$ from 
which we extract the character table of $C_\M(t)$. For this purpose we use the group $2.\Co_1$
in its $24$-dimensional representation modulo $3$,
and a group $2^{1+24}\Co_1$ constructed in a $4096$-dimensional representation (again over the
field of order $3$) by standard methods. The group whose character table we compute first is the
diagonal product of these two groups, relative to the quotient $\Co_1$,
and is a double cover of the Monster involution centralizer.
The construction of this group is described in detail in \cite{Mconmod3}, and also, with a higher standard of reproducibility, in \cite{Seysen}. 
This can be done without using
any construction of the Monster. See \cite{steps}.

From the character table of $C_\M(t)$ we find $91$ conjugacy classes of elements 
which power to a conjugate of $t$ in $\M$.
Together with the $42$ classes that power into $2A$, and the identity element,
this brings our class count to $42 + 91 + 1 = 134$. 

\subsection{Elements of order $3$ and some $3$-local subgroups}
\label{3elements}
\subsubsection*{\bf Step 1} Elements of classes $3A$ and $3C$ are products of pairs of $2A$ elements. The structure of their
normalizers is $3.\Fi_{24}$ and $S_3 \times \Th$ respectively. To see this we first observe that the centralizers are given in \cite{GMS}.  
The precise structure of $N_\M(\langle x_3\rangle)$, with $x_3 \in 3C$,  follows from the fact that ${\rm Out}(\Th) = 1$. 
Now we need to know that $z$ acts nontrivially on $C_\M(x_1)$ ($x_1 \in 3A$). 
In fact, \cite{GMS} shows that the centralizer of $\langle x_1,z\rangle$ is $\Fi_{23}$ (see Table \ref{suborbits}), 
from which the structure of $N_\M(\langle x_1\rangle)$
follows.

\subsubsection* {\bf Step 2} The group $2.\B$ has two classes of $3$-elements, say $3a$ and $3b$. The class $3a$ fuses to the class $3A$ and 
the other one, represented by $x_2$, has centralizer order $3^{12}|6.\Suz|$. 
We first show that every element of order $3$ in $C_\M(t)$ is conjugate to $x_1$, $x_2$ or $x_3$.
Note that a $2d$-element of $\B$ lifts to a $2^2$ of type $2ABB$ in $\M$, with centralizer of order $2^{26}|O_8^+(2)|$. This centralizer
maps to $2^{1+8}O_8^+(2)$ in the quotient $\Co_1$ of $C_\M(t)$, and contains elements of the three $\Co_1$-classes $3a$, $3b$ and $3c$.
Hence each of these three classes fuses to $3A$ or $3B$. The class $3d$ has centralizer $2^{1+8}A_9$ in $C_\M(t)$, 
and therefore fuses to $3C$.

We now claim that $C_\M(x_2) \cong 3_+^{1+12}.2\Suz$. 
The group $6.\Suz$ is contained in $C_\M(t)$ (where $t \in 2B$) and $O_3(Z(6.\Suz))$ is centralized by a conjugate of $z$.  
Thus we find a copy of $6.\Suz$ in $C_\M(x_2)$. In $C_\M(x_2)$ the involution in $Z(6.\Suz)$ cannot centralize more than $6.\Suz$.
As there are no quasisimple groups with involution centralizer $6.\Suz$ we see that $F^*(C_\M(x_2))$ is not quasisimple and hence 
must be a $3$-group. The involution in the center of $6.\Suz$ must act fixed point freely on $O_3(C_\M(x_2))/\langle x_2 \rangle$
and hence the latter group is elementary abelian. In $2.\B$ we see that $O_3(C_\M(x_2))$ contains a nontrivial extraspecial group which implies 
that the group $C_\M(x_2)$ has shape $3_+^{1+12}.2\Suz$ as claimed. 

\subsubsection* {\bf Step 3} Next we argue that every $3$-element is conjugate to one of $x_i$ with $1 \leq i \leq 3$. 
We first observe (from the character table) that any element of order $3$ in $C_\M(t)$ is either inverted or centralized by a conjugate 
of $z$, 
and thus is conjugate to one of the $x_i$. The elements of order $3$ in $C_\M(z)$ are conjugate to either $x_1$ or $x_2$.
Now, using \cite[Section 4]{W}, we observe that every $3$-element in $3_+^{1+12}.2\Suz.2$ is centralized by an involution.
Since this group contains a full Sylow $3$-subgroup, this concludes the proof that there are exactly three
conjugacy classes of elements of order $3$ in $\M$.

\subsubsection*{\bf Step 4} $\M$ possesses a subgroup $3^8.O_8^-(3)$ which is constructed from two copies of 
$3 \times 3^7.O_7(3)$ which intersect in $3^8.L_4(3)$. The latter group normalizes a $3A$ pure subgroup 
of rank two. [The normalizer of the $3A$ pure rank two subgroup induces a $D_8$ and also normalizes 
the subgroup $3^8$, which proves that the normalizer of the $3^8$ is bigger than $3^8O_7(3)$. 
On the other hand the subgroup $3^8$ contains exactly $1066$ elements from  class $3B$ and must act 
primitively on the set. Inspection of the primitive groups library shows that the automizer 
of the $3^8$ subgroup is (at least) $O_8^-(3)$.] Alternatively we can simply quote  \cite[Theorem 3]{W}.

\subsubsection*{\bf Step 5} Finally we construct the character tables of $3_+^{1+12}.2\Suz.2$, $3.\Fi_{24}$, and  $S_3 \times \Th$.
These 
three tables  will give us all classes of elements (of odd order) which power into a $3X$ element where $X \in \{A,B,C\}$.
There are exactly $31$ classes of such elements.
Together with the 
$134$ classes already found,
this brings our class count to $134+31  = 165$.

\subsection{Elements of order $5$}
\label{5elements}
\subsubsection*{\bf Step 1}
First note that 
$C_\M(z)$ contains two classes of elements of order $5$, with different values of the permutation character.
Hence $\M$ possesses conjugacy classes $5A$ (represented by $y_1$) with 
centralizer $5 \times \HN$ and class $5B$ (represented by $y_2$) with centralizer order $5^{1+6}|2.\Ja_2|$. 
As already noted in Section~\ref{3elements}, 
elements of $\B$-class $2d$ lift to a $2^2$ of type $2ABB$ in $\M$, with centralizer of order $2^{26}|O_8^+(2)|$.
This centralizer maps to a subgroup $2^{1+8}O_8^+(2)$ in the quotient $\Co_1$ of $C_\M(t)$. Since $2^{1+8}O_8^+(2)$
intersects all three $\Co_1$-classes of elements of order $5$, it follows that every $5$-element in $C_\M(t)$ is in one of the
two $\M$-classes $5A$ or $5B$.

We deduce that the structure 
of the centralizer of a $5B$-element is $5^{1+6}2\Ja_2$ in exactly the same way as for the case $C_\M(x_2)$. 
More precisely we first note that we can see $5 \times 2\Ja_2$ inside $C_\M(t)$. Then we observe that 
there are no simple groups with involution centralizer $C$ having a composition factor isomorphic to $\Ja_2$. Thus we deduce that 
$|O_5(C_\M(y_2))| = 5^7$. Since $Z(2.\Ja_2)$ inverts every element in $O_5(C_\M(y_2))/\langle y_2 \rangle$ and $2.\Ja_2$ acts irreducibly
we deduce that $O_5(C_\M(y_2))$ is extraspecial of exponent $5$. 

\subsubsection*{\bf Step 2} We show that every $5$-element is conjugate to one of the $y_i$ by showing that every $5$-element in $C_\M(y_2)$ 
is centralized by an involution.  This calculation was done in \cite[Section 9]{W}  without using the stated assumption that
$\M$ contains just two classes of elements of order $5$, since it is a calculation inside
the group $5^{1+6}.2\Ja_2$ that does not depend on the existence of the Monster. 
However, we also repeated the calculation as part of the next step.

\subsubsection* {\bf Step 3} The character tables of the centralizer and normalizer of $5A$ are easy to deduce from the already verified 
character table of $\HN$. We (re-)calculate the character tables of the centralizer and normalizer of $5B$, using Magma.
It is then straightforward to
 deduce that $\M$ possesses $8$ classes of order coprime to $6$ that power up to $5A$ or $5B$.

Our class count now stands at $165 + 8 = 173$.

\subsection{Elements of order $11$} The group $C_\M(z)$ contains a rational element of order $11$ with centralizer of order 
$11|\Ma_{12}|$. We call this class $11A$. 
Next we observe that if $S \in {\rm Syl}_{11}(\M)$ then $S$ is elementary abelian with $C_\M(S) = S$. 
Now we also see, from the facts that the $11A$ element is rational and that $\Ma_{12}$ contains $11{:}5$, that 
$5^2{:}2 \leq N_\M(S)$. Thus there exists an element $f$ of order $5$ normalizing $11A$ and centralizing an element from $S$ not contained 
in the cyclic group normalized by $f$. 

On the other hand $f$ lies in a conjugate of $C_\M(z)$ which implies that 
$f$ is from class $5A$ which in turn implies that $C_S(f)$ fuses to $11A$. As fusion of $11$-elements is controlled 
in the normalizer of $S$ we see that $|N_\M(S)| > 11^2.5^2.2$. On the other hand $N_\M(S)/S$ is a subgroup of 
${\rm GL}_2(11)$ of order not divisible by $11$. This leaves exactly $5$ possibilities for the order of 
$N_\M(S)$ only one of which is compatible with Sylow's theorem. Thus we see that $N_\M(S)/S \cong (5 \times 2.A_5)$
and that all $11$-elements are conjugate in $\M$.

Our class count now stands at $173 + 1 = 174$.

\subsection{Elements of orders $17$, $19$, $23$, $31$, and $47$}
The elements lie in cyclic Sylow subgroups which fuse into $2.\B$. So 
we compute the centralizer orders from the permutation character of $\M$ on the cosets of $C_\M(z) \cong 2.\B$.

The elements of order $17$ and $19$ are already rational in $2.\B$ whereas the elements of order $23$, $31$ and $47$ are 
normalized by elements of order $11$, $15$ and $23$ respectively. 
Using Sylow's theorem we now see that the normalizer orders of the $17$, $19$, $23$, $31$, and $47$-elements are 
as claimed in the $\mathbb{ATLAS}$. 

Our class count now stands at $174 + 1 + 1 + 2 + 2 + 2 = 182$. 

\subsection{$13$-Elements} 
\label{13elements}
\subsubsection* {\bf Step 1} One class of $13$-elements fuses into $C_\M(z)$, is rational, and has centralizer order $13|L_3(3)|$
(from the permutation character). 
Call this class of elements $13A$. 
If $x\in 13A$ then $C_\M(x)/\langle x\rangle$ is a group with involution centralizer $2S_4$, of index $3^2.13$, so $C_\M(x)\cong 13\times L_3(3)$.
The class $13A$ is not $13$-central since its centralizer order is not divisible by $13^3$. 
This implies that the Sylow $13$-subgroups of $\M$ are not abelian. Thus \cite[Proposition 10.4]{GLS} implies that 
the Sylow $13$ subgroups of $\M$ are extraspecial. As $\M$ is simple, \cite[Proposition 15.21]{GLS} implies that
the Sylow $13$ subgroups must be of exponent $13$, that is isomorphic to $13^{1+2}_+$. 

\subsubsection*{\bf Step 2}
The centralizer of a Sylow $13$ subgroup in ${13} \times L_3(3)$ is ${13}\times {13}$ which implies that $C_\M(P_{13}) \leq P_{13}$
for $P_{13} \in {\rm Syl}_{13}(\M)$. Let $A$ be a Sylow $13$-subgroup of $C_\M(x)\cong 13 \times L_3(3)$.

As $P_{13}$ is extraspecial, $A$ contains a unique cyclic $13$-central subgroup (in $P_{13}$). [Uniqueness follows from the fact that $A \lhd P_{13}$
and so there is a $13$-element $d$ in $P_{13} \setminus A$ acting on the $1$-spaces of $A$ with orbit lengths $13$ and $1$.
The elements in the group fixed by $d$ are central in $P_{13}$, while the rest are in class $13A$.] 
Every Sylow $13$-subgroup in the centralizer of a $13A$-element is elementary abelian of rank $2$ and 
is normalized by a group of order $36$. Thus the unique subgroup of 
$13$-central elements is also normalized 
by a group of order $36$. 

Applying Sylow's theorem and observing that $N_\M(P_{13})/P_{13}$ is isomorphic to a subgroup of ${\rm GL}_2(13)$ 
of order divisible by $36$ yields that $|N_\M(P_{13})| = 288\times13^3$. 
There are two
conjugacy classes of subgroups 
of order $288$ in ${\rm GL}_2(13)$, one with orbit sizes $6+8$ on the $14$ points of the projective line,
and the other with orbits $2+12$.  Since we know there is an orbit of length $8$, the Sylow $13$-normalizer
has structure $13^{1+2}{:}(3\times 4S_4)$.

\subsubsection*{\bf Step 3}
The structure of the normalizer of $P_{13}$ also yields that $13$-central elements are rational and centralized 
by an involution. We call this class $13B$. The involution centralizers $C_\M(z)$ and $C_\M(t)$ each contain 
a unique class of $13$ elements and $C_\M(z)$ contains elements from class $13A$. Thus $C_\M(t)$ must contain 
only $13$-elements from class $13B$. Thus it follows that the involution centralizers of $13B$-elements of $\M$
are double covers of $A_4$ and hence have quaternion Sylow $2$-subgroups. Thus the Brauer--Suzuki theorem 
implies that $C_\M(13B)/Z^*(C_M(13B))$ is a group with Sylow $2$-subgroup isomorphic to $2 \times 2$;
hence has a normal $2$-complement or is isomorphic to $L_2(q)$ with $q \equiv 3,4 \ \mbox{mod} \ 8$. 

The 
cases $L_2(q)$ with $q > 5$ are impossible. To see this note that on the one hand $P_{13}$ is non-abelian and 
on the other hand  $13$ does not divide the Schur multiplier of $L_2(q)$ for any $q$ and $13$ divides
${\rm Out}(L_2(q))$ if and only if $q = q_0^{13}$.
Also $Z^*(C_\M(13B)) = O_{2'}(C_\M(13B)){:}2$. In particular this shows that $O_{13}(C_\M(13B))$ is characteristic 
in $C_\M(13B)$ and of order $13^3$; i.e. $C_\M(13B)$ is conjugate to a subgroup of $N_\M(P_{13})$. 
Thus we deduce that $C_\M(13B) = 13^{1+2}{:}Q_8{:}3$.

\subsubsection*{\bf Step 4}
Now we observe that $N_\M(P_{13})$ has two orbits on the $1$-spaces of $P_{13}/Z(P_{13})$;
one of length $8$ and one of length $6$. In particular, every non-trivial element of $P_{13}$ is centralized
by an involution, so is in class $13A$ or $13B$.
Thus we have found two classes of $13$ elements and their centralizer orders. 
Since these orders are divisible only by the primes $2$, $3$ and $13$, there are no further elements
of order divisible by $13$.
Our class count now stands at $182 + 2 = 184$.

\subsection{Elements of orders $29$, $41$, $59$ and $71$}
\label{29-71elements}
\subsubsection* {\bf Step 1} $C_\M(x_1)$ contains a Sylow-$29$ subgroup $P_{29}$ of $\M$. 
In $N_\M(\langle x_1 \rangle)$ the normalizer of $P_{29}$ is of the form $(29{:}14 \times 3).2$.
We see that all $29$-elements are conjugate and thus rational. Therefore the normalizer can grow only 
via the centralizer. We note that the only primes which could potentially grow the centralizer order
beyond the $3 \times 29$ that we see in $|C_\M(x_1)|$ are $7$, 
$41$, $59$ and $71$. Using 
Sylow's theorem we deduce that the only possible orders for $|C_\M(P_{29})/P_{29}|$ are 
$3$ and $3.59$. 

We rule out the possibility that $|C_\M(P_{29})/P_{29}|$ has order divisible by $59$ by observing that 
in all cases the Sylow $59$-subgroup 
of $C_\M(P_{29})/P_{29}$ is normal and hence normalized by a $3$ element. As $58$ is not divisible by 
$3$ we see that $59$ must divide the centralizer order of a $3$-element; a contradiction. 

Thus we see that $P_{29}$ is rational and that $|C_\M(P_{29})| = 3.29$.

\subsubsection* {\bf Step 2}  $3^8.O_8^-(3)$ contains a Sylow $41$-subgroup $P_{41}$ of $\M$ which is normalized by 
an element of order $4$. Thus the only possible further divisors of $|N_\M(P_{41})/P_{41}|$ must be divisors of 
$2.5.7^6.59.71$. Sylow's theorem allows for the possibilities $2.5$ and $2.7.71$. In the latter case 
the Sylow $71$ subgroup of $N_\M(P_{41})$ is normal and hence normalized by an element of order $4$. 
This forces $71$ to divide the order of an involution centralizer; again a contradiction. Thus 
$N_\M(P_{41}) = P_{41} \rtimes Z_{40}$, as $41$ does not divide the order of any $5$-centralizer. 
 
\subsubsection*{\bf Step 3} The only possible divisors of $|N_\M(P_{59})/P_{59}|$ are divisors of 
$2.29.7^6.71$. 
Sylow's theorem 
implies that $|N_\M(P_{59})/P_{59}| =29$ and thus 
$N_\M(P_{59}) = P_{59} \rtimes Z_{29}$.

\subsubsection*{\bf Step 4} 
The only possible divisors of $|N_\M(P_{71})/P_{71}|$ 
are divisors of $2.5.7^5$. From Sylow's theorem we infer that the only possible divisor is $35$ and 
thus we have $N_\M(P_{71}) = P_{71} \rtimes Z_{35}$, again as $71$ does not divide the centralizer of any $3$ or $5$-element. 

Our conjugacy class count now stands at $184 + 1 + 1 + 2 + 2 = 190$.

\subsection{$7$-Elements} 
\label{7elements}

\subsubsection* {\bf Step 1}
$C_\M(z)$ possesses a class of $7$-elements $7A$ with centralizer $7 \times \He$; again the 
order of the centralizer can be deduced from the permutation character of $\M$ on the cosets of $C_\M(z)$ whereas 
the structure can be deduced from the involution centralizer and a $2$-local characterization of $\He$.
The group $C_\M(x_1)$ contains two classes of $7$-elements which are not fused in $\M$ as they have distinct 
traces on the $196883$-dimensional module of $\M$ (see Lemma~\ref{3F24char}). 
One of the classes fuses to $7A$ and 
the other, call it $7B$,  fuses to the $7$-central class of $\He$ and also into $C_\M(t)$. From this we deduce 
that the centralizer of a $7B$ element contains an extraspecial $7$-group of order $7^3$ and a subgroup $7 \times 2.A_7$.

\subsubsection*{\bf Step 2} By considering the centralizer of a $7A$-element we see that the
centralizer of a Sylow $7$-subgroup is contained in its center and that the order 
of the center is at most $7$; i.e., generated by a $7B$-element. As $7B$ is rational and is centralized 
by $2.A_7$ we see that  $|N_\M(P_7)/P_7|$ is divisible by $36$ for every $P_7 \in {\rm Syl}_7(\M)$.

\subsubsection*{\bf Step 3}
There exist subgroups of rank $2$ which are $7A$ pure (in $C_\M(z)$), which means that the $7$ share of 
the centralizer of a prime order element of order $>5$ centralizing a $7B$ element is at most $7^4$.

We have found two classes of $7$-elements and two classes of elements of order $7.17 = 119$.
Our class count stands at $190 +4  = 194$.

\subsubsection*{\bf Step 4}
\label{7Belements} Finally we determine the order of the $7B$ centralizer.
We begin by observing that our previous subsections show that no prime $\geq 11$ divides $|C_\M(7B)|$.
The group $\HN$ is contained in $C_\M(z)$ and thus contains only $7A$ elements.
Conversely, the $7A$ centralizer $7\times \He$ contains only one class of elements of order $5$.
Hence the centralizer of a $5B$-element,
i.e. $5^{1+6}:2.J_2$, contains elements of class $7B$. So we see that $7B$ is centralized by a $5B$ element and that 
$|C_\M(7B)|_5 = 5$. 

\subsubsection*{\bf Step 5}
Next we observe that $7B$ is contained in $\He \leq 3.\Fi_{24} = C_\M(x_1)$ and in 
$C_\M(x_2)$ but not in $C_\M(x_3)$ as $\Th < C_\M(z)$. From this we see that 
$|C_\M(7B)|_3 = 9$.
The fact that $7B$ is contained in $C_\M(t)$ but not $C_\M(z)$ shows that 
$|C_\M(7B)|_2 = 2^4$. So we have now established that $|C_\M(7B)| \leq 7^5|2.A_7|$.

Combining the information from the preceding subsections we have accounted for all but 
$|\M|/(7^5|2.A_7|)$ elements of $\M$.
Hence
the conjugates of $7B$ now account for all the remaining elements of $\M$.

\subsubsection*{\bf Step 6}
As there does not exist a quasisimple group of order $7^a|2.A_7|$ with $1 \leq a \leq 5$ we see 
that $|O_7(C_\M(7B))| = 7^5$. Now $2.A_7$ must act nontrivially on $O_7(C_\M(7B))$.
As the minimal dimensional module for $2.A_7$ over ${\rm GF}(7)$ is $4$, 
we deduce that $O_7(C_\M(7B))$ is extraspecial of order $7^5$ and exponent $7$. 

\subsection{The conjugacy class list}
In summary we have established
\begin{prop}
The group $\M$ has exactly $194$ conjugacy classes of elements. The orders 
and centralizer structure are as given in the $\mathbb{ATLAS}$.
\end{prop}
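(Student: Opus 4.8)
The proposition is the summary of Section~\ref{ccls}, so the plan is to assemble the conjugacy class list from the prime-by-prime analysis carried out there and then to argue that the resulting list of $194$ classes is both complete and irredundant. I would organise the bookkeeping exactly as in that section. First, the elements of even order: the character table of $C_\M(t)$ supplies $91$ classes powering to $2B$, to which we add the $42$ classes powering to $2A$ and the identity, giving $134$. Next, the odd-order elements whose order is divisible by $3$: the character tables of $3_+^{1+12}.2\Suz.2$, $3.\Fi_{24}$ and $S_3 \times \Th$ yield $31$ further classes. Then the elements of order coprime to $6$ but divisible by $5$ contribute $8$ classes, from the centralizer and normalizer tables of $5A$ and $5B$. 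The primes $11, 17, 19, 23, 31, 47$, handled through the permutation character of $\M$ on the cosets of $2.\B$ together with Sylow's theorem, contribute $1+1+1+2+2+2 = 9$ classes; the prime $13$ contributes $2$; the primes $29, 41, 59, 71$ contribute $1+1+2+2 = 6$; and elements of order divisible by $7$ contribute the final $4$. Adding, $134 + 31 + 8 + 9 + 2 + 6 + 4 = 194$, and I would display this running total explicitly.

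It then remains to check that these $194$ classes are distinct and that no class has been missed. Distinctness within each block is immediate from the construction — the classes in a block are separated by their centralizer orders, or by the values of $\chi$ (as in Lemma~\ref{3F24char}), or by the values of the permutation character of $\M$ on the cosets of $2.\B$; distinctness across blocks holds because the blocks are separated by the set of primes dividing the element order. For exhaustiveness one must know that every element order occurring in $\M$ is built from the primes already treated and, for each such order, that all classes have been located. The primes dividing $|\M|$ are $2,3,5,7,11,13,17,19,23,29,31,41,47,59,71$, and the centralizer and Sylow-normalizer orders computed in Section~\ref{ccls} show that no element order is divisible by two of the ``large'' primes: for instance $|C_\M(7B)|$ involves only $2,3,5,7$, both $13$-centralizer orders involve only $2,3,13$, and in the subsections on $11,17,\dots,47$ and in Section~\ref{29-71elements} the relevant centralizers are shown to involve only the one large prime together with small ones. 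Hence every element order is already represented, and within each prime block the completeness statement has been proved locally (exactly three classes of order $3$, exactly two of order $5$, and so on).

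The decisive completeness check is the global identity $\sum_i |\M|/|C_\M(g_i)| = |\M|$, the sum running over the $194$ representatives: with all centralizer orders in hand one verifies this arithmetic equality, and it is exactly the statement that fails if a class has been dropped. This is in fact how the argument is closed for $7B$ in Section~\ref{7Belements}, where the conjugates of $7B$ are forced to account for precisely the remaining $|\M|/(7^5|2.A_7|)$ elements. Finally, the centralizer \emph{structures} have been pinned down case by case in Section~\ref{ccls} — the $2$-local ones from the constructions of $C_\M(t)$ and $C_\M(z)$, the extraspecial-by-$X$ shapes $3_+^{1+12}.2\Suz$, $5_+^{1+6}.2\Ja_2$ and $7_+^{1+4}.2A_7$ via the absence of suitable quasisimple groups, the $13^{1+2}$ cases via Brauer--Suzuki and \cite{GLS}, and the cyclic or near-cyclic Sylow normalizers for the large primes — and comparing each with the \Atlas\ finishes the proof. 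The main obstacle is not any single calculation but this exhaustiveness bookkeeping: the real content is marshalling the centralizer-order data so that the single equality $\sum_i |\M|/|C_\M(g_i)| = |\M|$ can serve as the certificate that the list of $194$ classes is complete.
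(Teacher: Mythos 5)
Your proposal is correct and follows essentially the same route as the paper: the proposition is simply the summary of Section~\ref{ccls}, assembled from the same running totals ($134+31+8+9+2+6+4=194$), with completeness established locally prime by prime (Sylow arguments plus the fact that every $p$-element is centralized by an involution or lies in a listed class) and closed globally by the counting argument that forces the conjugates of $7B$ to exhaust the remaining $|\M|/(7^5|2.A_7|)$ elements. The only presentational difference is that you elevate the class equation to the ``decisive'' certificate, whereas the paper uses it only for the final $7B$ step and relies on the local analyses for everything else; this is a matter of emphasis, not a gap.
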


This concludes our analysis of conjugacy classes and centralizers.

\section{Computation of the character table}
\label{chartab}
\subsection{Conjugacy class fusion from certain subgroups and character values on $\chi$}
By Hypotheses $\M$ possesses an ordinary character $\chi$ of degree $196883$. 

We assume now that the character tables of $C_\M(z)$, $C_\M(t)$, $C_\M(x_1) = 3.\Fi_{24}$,
$C_\M(x_2)$, $C_\M(x_3)$, $C_\M(y_1)= C_\M(5A)$, $C_\M(5B)$ and $C_\M(7A)$ have been computed. 
We know, for the 194 classes, the element orders, centralizer orders,
    and partial fusions from the character tables of subgroups.

 Next we determine the $p$-th power maps, for all primes $p$ up to the
    maximal element order, which will be needed later for inducing
    characters from cyclic subgroups.
    For that, it is sufficient to use the following ingredients:
\begin{itemize}
  \item the commutativity of subgroup fusions and power maps;
      GAP provides functions for that, and this way, many more values
      of the class fusions get determined than had been known from the
      definitions of the classes,
\item the fact that the character values on the classes of element orders
      59, 71, and 119 lie in a known quadratic field,
\item a few allowed choices in the 2nd and 3rd power map, for example
      concerning element orders 23 and 46,
\item one argument about a class of element order 21, which uses the
      structure $7\times \He$ of the 7A centralizer (the explicit character
      table is not used besides this argument).
\end{itemize}

 Next we compute the values of the 199883 on all classes.
\begin{itemize}
 \item The decompositions of the restrictions to $2.\B$ and $3.\Fi_{24}$ are known,
      and the partial class fusions give us many values.
\item From these values and the partial fusion from $C_\M(2B)$,
      the decomposition of the restriction to $C_\M(2B)$ is determined,
      the matrix of possible constituents on the known classes has full rank.
\item  All missing values except one are determined by their congruence
      modulo $p$ to the value at $p$-th powers, and the fact that the square
      of the value must be smaller than the centralizer order.
\item The missing value is computed from the scalar product with the
      trivial character.
      \end{itemize}
      
If $H$ is one of the centralizers whose character table we have computed (or is known) 
then we can deduce $\chi|_H$. In this way we can compute the character values of 
all elements in $\M$ which fuse into one of the centralizers above. 

Thus we need to determine the values of $\chi$ on classes $41A$, $59AB$ or $71AB$. 
Since $196883$ is divisible by $59$ and $71$, this character forms a block of defect $0$
for both primes, so the character values are $0$ on these four classes.
Similarly, $196883 \equiv 1 \bmod 41$, so that this character lies in a block of defect $1$ for $p=41$,
and since the Sylow $41$-subgroup is self-centralizing the character value on class $41A$ is $1$.

   Here we can decide which of the two possible factors of the
    $3^{1+12}.6\Suz.2$ really occurs, because only one of the two tables
    admits a decomposition of the 196883.

We have now verified that the values of $\chi$ are as claimed in the $\mathbb{ATLAS}$.

\subsection{Verifying the characters}
From our earlier analysis we have already computed the values of the character $1_{2.\B}^\M$ and 
we know the exact character values of $\chi$. 
From this point on, the verification of the full character table is more or less straightforward.

  Now we take the character table head of $\M$ (including all power maps),
    the irreducible character of degree 196883,
    the character tables of $2.\B$, $3.\Fi_{24}$, $2^{1+24}.\Co_1$,
    and $3^{1+12}.2.\Suz.2$.
    The class fusions of these subgroups are determined by the given data,
    we can induce from them, and from cyclic subgroups.
    This suffices to verify the irreducibles; here the \Atlas\ table of $\M$ is
    used as an ``oracle'', in the same way as the \Atlas\ table of $\B$ was
    used in its verification.

\end{document}